\documentclass[reqno, 12pt]{article} %amsart

\pdfoutput=1

\usepackage{enumerate}
\usepackage{latexsym}
\usepackage[centertags]{amsmath}
\usepackage{amsfonts}
\usepackage{amssymb}
\usepackage{amsthm}
\usepackage{newlfont}
\usepackage{graphics}
\usepackage{color}
\usepackage{float}
\usepackage{diagbox}
\textwidth 480pt \hoffset -60pt \textheight 9in \voffset -30pt
\parindent 8mm
\parskip 2mm

\usepackage{hyperref}
\usepackage{longtable}
\usepackage{rotating}
\usepackage{multirow}
\usepackage{extarrows}
\usepackage[sort,compress,numbers]{natbib}
\usepackage[utf8]{inputenc}

\newtheorem{thm}{Theorem}[section]
\newtheorem{cor}[thm]{Corollary}
\newtheorem{lem}[thm]{Lemma}

\theoremstyle{mydefinition}

\theoremstyle{myremark}

\allowdisplaybreaks[4]

\def\Res{\mathop{\mathrm{Res}}}

\title{Three Simple Reduction Formulas for the Denumerant Functions}

\author{Feihu Liu$^{1}$, Guoce Xin$^{2,}$\thanks{This work is partially supported by the National Natural Science Foundation of China (No.12071311).},\ and Chen Zhang$^{3}$
\\[2mm]
{\small $^{1, 2, 3}$ School of Mathematical Sciences,}\\[-0.8ex]
{\small Capital Normal University, Beijing, 100048, P.R.~China}\\
{\small $^1$ Email address: liufeihu7476@163.com}\\
{\small $^2$ Email address: guoce\_xin@163.com}\\
{\small $^3$ Email address: ch\_enz@163.com}
}

\date{April 22, 2024}

\begin{document}

\maketitle

\begin{abstract}
Let $A$ be a nonempty set of positive integers. The restricted partition function $p_A(n)$ denotes the number of partitions of $n$ with parts in $A$. When the elements in $A$ are pairwise relatively prime positive integers, Ehrhart, Sert\"oz-\"Ozl\"uk, and Brown-Chou-Shiue derived three reduction formulas for $p_A(n)$ for $A$ with three parameters. We extend their findings for general $A$ using the Bernoulli-Barnes polynomials.
\end{abstract}

\noindent
\begin{small}
 \emph{Mathematic subject classification}: Primary 05A17; Secondary 05A15, 11P81.
\end{small}

\noindent
\begin{small}
\emph{Keywords}: Denumerant; Restricted partition function; Bernoulli-Barnes polynomial.
\end{small}

\section{Introduction}

Let $A=\{a_1,a_2,...,a_k\}$ be a set of positive integers with $k\geq 1$.
Furthermore, let $p_{A}(n)$ denote the number of nonnegative integer solutions to the equation
$$a_1x_1+a_2x_2+\cdots +a_kx_k=n.$$
The $p_A(n)$ is called the \emph{restricted partition function} of the set $A$.
Some scholars also refer to it as \emph{Sylvester's denumerant} \cite{J. J. Sylvester} when $\gcd(A)=1$.

Sylvester \cite{J. J. Sylvester} and Bell \cite{Bell43} proved that $p_A(n)$ is a quasi-polynomial of degree $k-1$, and the period is a common multiple of $a_1,a_2,\ldots, a_k$.
Beck, Gessel, and Komatsu \cite{BeckGesselKomatsu} found an expression for the polynomial part of $p_A(n)$. Nathanson \cite{Nathanson} gave an asymptotic formula of $p_{A}(n)$.
Cimpoeas \cite{Cimpoeas18} proved that the $p_A(n)$ can be reduced to solving a linear congruence formula.
Some relevant references can be found in \cite{BayadBeck,GostGarcia10,Komatsu03,XinZhangDernm}.

For $k=2$, Sert\"oz \cite{Sertoz98} and Tripathi \cite{A.Tripathi} independently obtained an explicit formula for $p_A(n)$.
For $k=3$, Ehrhart \cite{Ehrhart65,Ehrhart66} and Sert\"oz and \"Ozl\"uk \cite{Sertoz86} gave recursive formulae for $p_A(n)$.
In this paper, we first extend the results of Ehrhart \cite{Ehrhart65,Ehrhart66} (the case $k=2,3$ in Theorem \ref{MainThm1}) as follows.
\begin{thm}\label{MainThm1}
Let $A=\{a_1,a_2, \ldots, a_k\}$, where $a_1,a_2,\ldots,a_k$ are pairwise relatively prime positive integers.
Let $n=q\cdot a_1a_2\cdots a_k+r$ with $0\leq r<a_1a_2\cdots a_k$. Then
$$p_{A}(n)=p_{A}(r)+(-1)^k(n-r)\sum_{i=0}^{k-2}\frac{(r-n)^i}{(i+1)!(k-i-2)!}\mathcal{B}_{k-i-2}(-r;a_1,a_2,\ldots,a_k),$$
where $\mathcal{B}_i(x;a_1,a_2,\ldots,a_k)$ is the Bernoulli-Barnes polynomials (defined by Equation \eqref{Bernoulli-Barnes}).
\end{thm}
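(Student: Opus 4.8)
The plan is to analyze the ordinary generating function $F(q)=\sum_{n\ge 0}p_A(n)q^n=\prod_{i=1}^k (1-q^{a_i})^{-1}$ through its partial fraction decomposition. Because the $a_i$ are pairwise relatively prime, if $d>1$ divides two distinct $a_i,a_j$ then $d\mid\gcd(a_i,a_j)=1$, a contradiction; hence at any primitive $d$-th root of unity with $d>1$ at most one factor $1-q^{a_i}$ vanishes. Consequently $F$ has a single high-order pole, of order exactly $k$, at $q=1$, while every other pole (at a nontrivial root of unity whose order divides some $a_i$, and hence divides $M:=a_1a_2\cdots a_k$) is simple. Reading off $[q^n]$ from the partial fractions, the order-$k$ pole at $q=1$ contributes a genuine polynomial $P(n)$ in $n$ of degree $k-1$, whereas each simple pole at $\zeta\ne 1$ contributes a bounded term proportional to $\zeta^{-n}$. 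Summing the latter yields a bounded function $\phi(n)$ that is periodic with period dividing $M$. Thus $p_A(n)=P(n)+\phi(n)$, where all coefficients of $n^1,\dots,n^{k-1}$ are constants and the periodicity is confined to the constant term.

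Given this splitting, the reduction is immediate: writing $n=qM+r$ with $0\le r<M$ we have $\phi(n)=\phi(r)$, so $p_A(n)-p_A(r)=P(n)-P(r)$. It therefore remains only to identify $P$ explicitly and to compute this difference. The polynomial part is the contribution of the pole at $q=1$, which I would extract by the substitution $q=e^{-t}$ and a residue computation, giving $P(n)=[t^{-1}]\bigl(e^{nt}\prod_{i=1}^k(1-e^{-a_it})^{-1}\bigr)$ (the order-$k$ pole at $t=0$ makes this a polynomial of degree $k-1$ in $n$).

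To bring in the Bernoulli-Barnes polynomials, substitute $t\mapsto-t$ in the defining generating function \eqref{Bernoulli-Barnes}; this turns $\prod_i t(e^{a_it}-1)^{-1}e^{xt}$ into $t^k\prod_i(1-e^{-a_it})^{-1}e^{-xt}$ and yields the identity $\prod_{i=1}^k(1-e^{-a_it})^{-1}=t^{-k}e^{xt}\sum_{m\ge 0}(-1)^m\mathcal B_m(x;a_1,\dots,a_k)\,t^m/m!$, valid for every $x$. Since $P(n)$ is independent of the free parameter $x$, I would choose $x=-r$ so that multiplying by $e^{nt}$ produces $e^{(n-r)t}$. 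Extracting $[t^{-1}]$ from $t^{-k}e^{(n-r)t}\sum_m(-1)^m\mathcal B_m(-r)t^m/m!$ forces $j+m=k-1$ (with $j$ the exponent contributed by $e^{(n-r)t}$), giving $P(n)=\sum_{j=0}^{k-1}\frac{(n-r)^j}{j!}\frac{(-1)^{k-1-j}}{(k-1-j)!}\mathcal B_{k-1-j}(-r;a_1,\dots,a_k)$.

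Finally, setting $n=r$ kills every term with $j\ge 1$, so the $j=0$ term equals $P(r)$ and cancels in $P(n)-P(r)$; reindexing the remaining sum by $i=j-1$ and simplifying the sign via $(-1)^{k-2-i}(n-r)^i=(-1)^k(r-n)^i$ reproduces exactly the claimed formula. I expect the genuine content to lie in the first paragraph: proving that pairwise coprimality forces all poles other than $q=1$ to be simple, so that the quasi-polynomial $p_A(n)$ really does decompose as a fixed degree-$(k-1)$ polynomial plus a purely constant-term periodic correction. The algebraic extraction in the last two paragraphs is then routine bookkeeping, the only delicate point being the overall sign in the residue formula for $P(n)$, which I would confirm by checking $k=1,2$, where the identity must reduce to $p_A(n)=p_A(r)$ and to Popoviciu's formula, respectively.
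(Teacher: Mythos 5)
Your proposal is correct --- I verified the residue formula for $P(n)$, including the sign you flagged as delicate, and your final reindexing does reproduce the stated formula --- but it takes a recognizably different route from the paper's. The paper never decomposes $p_A(n)$ into polynomial plus periodic parts. It subtracts first, writing $p_A(n)-p_A(r)=\mathop{\mathrm{CT}}_{\lambda}\lambda F(\lambda)$ with
$$F(\lambda)=\frac{\lambda^{-r-1}(\lambda^{-qa_1a_2\cdots a_k}-1)}{(1-\lambda^{a_1})(1-\lambda^{a_2})\cdots(1-\lambda^{a_k})},$$
and then kills all residues away from $\lambda=1$ in one stroke: by pairwise coprimality each nonzero pole $\xi\neq 1$ is simple, and the numerator factor $\lambda^{-qa_1a_2\cdots a_k}-1$ vanishes at $\xi$ because $\xi^{a_1a_2\cdots a_k}=1$. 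A single residue computation at $\lambda=1$ (Lemma \ref{ResMeromor} with $\lambda=e^s$), in which the factor $(e^{-qa_1a_2\cdots a_ks}-1)/(-qa_1a_2\cdots a_ks)$ supplies the powers of $(r-n)$, then lands on the stated sum over $i$ directly. Your argument instead establishes the quasi-polynomial structure $p_A(n)=P(n)+\phi(n)$ via partial fractions, cancels $\phi$ by periodicity modulo $a_1a_2\cdots a_k$, and only then computes and differences the polynomial part; note that your cancellation mechanism is the same fact ($\zeta^{a_1a_2\cdots a_k}=1$ at every non-unit pole $\zeta$) in different packaging, and the analytic core at the unit pole (exponential substitution plus the Bernoulli--Barnes expansion) is shared. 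What your route buys is a stronger intermediate result --- an explicit formula for the polynomial part of the denumerant, essentially recovering the result of \cite{BeckGesselKomatsu} cited in the introduction, with the free parameter $x=-r$ in the Bernoulli--Barnes generating function a nice touch. What the paper's route buys is economy: no partial-fraction structure to justify, no separate evaluation of $P(r)$ and cancellation of the $j=0$ term, and no reindexing, since the factor $(e^{-qa_1a_2\cdots a_ks}-1)/(-qa_1a_2\cdots a_ks)$ produces exactly the sum $\sum_{i=0}^{k-2}(r-n)^i/(i+1)!$ that appears in the theorem.
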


Secondly, we generalize the results of Sert\"oz and \"Ozl\"uk \cite{Sertoz86} (the case $k=3$ in Theorem \ref{MainThm2}) as follows.
\begin{thm}\label{MainThm2}
Let $A=\{a_1,a_2, \ldots, a_k\}$, where $a_1,a_2,\ldots,a_k$ are pairwise relatively prime positive integers.
Let $1\leq x\leq a_1+a_2+\cdots +a_k-1$. Then
$$p_{A}(a_1a_2\cdots a_k-x)=(-1)^k(a_1a_2\cdots a_k)\sum_{i=0}^{k-2}\frac{(-a_1a_2\cdots a_k)^i}{(i+1)!(k-i-2)!}\mathcal{B}_{k-i-2}(x;a_1,a_2,\ldots,a_k).$$
\end{thm}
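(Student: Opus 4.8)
The plan is to read Theorem \ref{MainThm2} as the evaluation, at the boundary residue $n=P-x$, of the ``one full period'' difference of $p_A$, where $P=a_1a_2\cdots a_k$. The engine is that for pairwise coprime $a_i$ this difference is an \emph{honest} polynomial in $n$, so the $q=1$ case of Theorem \ref{MainThm1} determines it for every integer $n$. Throughout write $s=a_1+a_2+\cdots+a_k$ and let $d_A$ denote the Sylvester denumerant quasi-polynomial, i.e.\ the quasi-polynomial agreeing with $p_A(n)$ for all $n\ge 0$.

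First I would show that $L(n):=d_A(n)-d_A(n-P)$ is a genuine polynomial in $n$, not merely a quasi-polynomial. Decomposing the generating function $F(t)=\prod_i(1-t^{a_i})^{-1}$ into partial fractions gives $d_A(n)=\sum_\zeta \zeta^{-n}Q_\zeta(n)$, summed over roots of unity $\zeta$ with $\zeta^{a_i}=1$ for some $i$, where $\deg Q_\zeta$ is one less than the order of the pole of $F$ at $\zeta$. Because the $a_i$ are pairwise coprime, no integer $d>1$ divides two distinct $a_i$, so every pole $\zeta\ne 1$ is simple and $Q_\zeta$ is constant; only $\zeta=1$ contributes the degree-$(k-1)$ polynomial part $Q_1=P_A$. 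Since $a_i\mid P$ forces $\zeta^P=1$ for each such $\zeta$, we have $\zeta^{-(n-P)}=\zeta^{-n}$, whence $L(n)=\sum_\zeta \zeta^{-n}\bigl(Q_\zeta(n)-Q_\zeta(n-P)\bigr)=P_A(n)-P_A(n-P)$: the constant terms cancel, leaving a polynomial of degree $k-2$.

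Next I would identify $L$ explicitly. Taking $q=1$ in Theorem \ref{MainThm1} (so $r=n-P$ with $0\le r<P$, i.e.\ $P\le n<2P$) and substituting $-r=P-n$ yields
$$p_A(n)-p_A(n-P)=(-1)^k P\sum_{i=0}^{k-2}\frac{(-P)^i}{(i+1)!\,(k-i-2)!}\,\mathcal{B}_{k-i-2}(P-n;a_1,\ldots,a_k)=:R(n).$$
For $n$ in this range both $n$ and $n-P$ are nonnegative, so the left side is exactly $L(n)$. Thus the two polynomials $L$ and $R$ agree on the $P$ consecutive integers $\{P,\ldots,2P-1\}$; as both have degree at most $k-2<P$, they coincide as polynomials, and therefore $L(n)=R(n)$ for \emph{every} integer $n$.

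Finally I would evaluate at $n=P-x$ with $1\le x\le s-1$, so that $P-n=x$:
$$d_A(P-x)-d_A(-x)=(-1)^k P\sum_{i=0}^{k-2}\frac{(-P)^i}{(i+1)!\,(k-i-2)!}\,\mathcal{B}_{k-i-2}(x;a_1,\ldots,a_k).$$
The right side is the asserted formula, so it remains to compute the two boundary terms. Since $x\le s-1\le P$ we have $P-x\ge 0$, hence $d_A(P-x)=p_A(P-x)$. For the other term I would invoke the reciprocity law for the denumerant, $d_A(-m)=(-1)^{k-1}\cdot(\text{number of strictly positive integer solutions of }a_1y_1+\cdots+a_ky_k=m)$, which drops out of comparing the expansions of the rational function $F(t)$ at $t=0$ and at $t=\infty$ (equivalently, Ehrhart--Macdonald reciprocity for the $(k-1)$-simplex). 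Because $x\le s-1<s=a_1+\cdots+a_k$, the least value attained by a strictly positive combination, there are no such solutions, so $d_A(-x)=0$, giving $p_A(P-x)=R(P-x)$, which is Theorem \ref{MainThm2}. The step I expect to be most delicate is precisely this boundary term: one must pass from $p_A$ to its quasi-polynomial extension $d_A$ (the two disagree at negative arguments), and it is exactly the hypothesis $1\le x\le s-1$ that makes $d_A(-x)$ vanish — the upper bound kills all positive solutions while the lower bound keeps the argument negative so that reciprocity applies; the polynomiality of $L$, resting on pairwise coprimality forcing simple poles, is what licenses pushing the $q=1$ identity down from the slab $[P,2P)$ to $n=P-x$.
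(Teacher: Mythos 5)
Your argument is correct, but it is genuinely different from the paper's. The paper proves Theorem \ref{MainThm2} by rerunning the proof of Theorem \ref{MainThm1} with a different exponent: starting from \eqref{CTDenumer}, it notes that $\CT_\lambda\, \lambda^{x}/\prod_i(1-\lambda^{a_i})=0$ (this uses $x\ge 1$), so one may subtract this term for free and obtain $p_A(a_1\cdots a_k-x)=\CT_\lambda\, \lambda^{x}(\lambda^{-a_1\cdots a_k}-1)/\prod_i(1-\lambda^{a_i})$, which is exactly the expression from Theorem \ref{MainThm1} with $\lambda^{-r}$ replaced by $\lambda^{x}$ and $q=1$; the upper bound $x\le a_1+\cdots+a_k-1$ is what makes the rational function proper, so Lemma \ref{ResNonegit} applies (the residue at infinity vanishes) and the residue-at-$\lambda=1$ extraction of Bernoulli--Barnes coefficients goes through verbatim. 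You instead use Theorem \ref{MainThm1} as a black box: you show the one-period difference $d_A(n)-d_A(n-P)$ is an honest polynomial (pairwise coprimality forces every pole $\zeta\neq 1$ of the generating function to be simple, and $\zeta^P=1$ cancels the constant contributions), match it against the polynomial $R(n)$ coming from the $q=1$ case on the $P$ consecutive integers of $[P,2P)$, extend the identity to all integers by interpolation (the needed inequality $k-2<P$, and also $x\le P$, hold for pairwise coprime parts, though you assert rather than check them), and evaluate at $n=P-x$, invoking Popoviciu/Ehrhart--Macdonald reciprocity to conclude $d_A(-x)=0$ precisely when $1\le x\le a_1+\cdots+a_k-1$. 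Your route costs two standard external inputs --- the partial-fraction structure of the denumerant quasi-polynomial, which you sketch, and the reciprocity law, which you only cite --- whereas the paper's route is self-contained given its Section 2 machinery. What your route buys, beyond avoiding any new residue computation, is a uniform explanation of Theorems \ref{MainThm2} and \ref{MainThm3} together: for $x\ge a_1+\cdots+a_k$ the same reciprocity gives $d_A(-x)=(-1)^{k-1}p_A(x-a_1-\cdots-a_k)$, and substituting this into your identity $L(P-x)=R(P-x)$ yields exactly Theorem \ref{MainThm3}, whose extra term $(-1)^kp_A(x-a_1-\cdots-a_k)$ the paper must instead produce by a separate residue-at-infinity calculation.
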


Thirdly, we extend the results of Brown, Chou, and Shiue \cite{Brown} (the case $k=3$ and $x=a_1+a_2+a_3$ (and $x=a_1+a_2+a_3+1$) in Theorem \ref{MainThm3}) as follows.
\begin{thm}\label{MainThm3}
Let $A=\{a_1,a_2, \ldots, a_k\}$, where $a_1,a_2,\ldots,a_k$ are pairwise relatively prime positive integers.
Let $a_1+a_2+\cdots +a_k\leq x\leq a_1a_2\cdots a_k$. Then
\begin{align*}
&p_{A}(a_1a_2\cdots a_k-x)+(-1)^kp_{A}(x-a_1-a_2-\cdots-a_k)
\\&\ \ \ \ \ \ \ \ \ \ \ \ \ \ \ \ =(-1)^k(a_1a_2\cdots a_k)\sum_{i=0}^{k-2}\frac{(-a_1a_2\cdots a_k)^i}{(i+1)!(k-i-2)!}\mathcal{B}_{k-i-2}(x;a_1,a_2,\ldots,a_k).
\end{align*}
\end{thm}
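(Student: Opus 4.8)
The plan is to reduce Theorem~\ref{MainThm3} to the single polynomial identity $Q(P-x)-Q(-x)=R(x)$ and then to read that identity off from Theorem~\ref{MainThm1}. Throughout I abbreviate $P:=a_1a_2\cdots a_k$ and $s:=a_1+a_2+\cdots+a_k$, write $R(x)$ for the right-hand side of the theorem, and let $Q$ denote the quasi-polynomial that agrees with $p_A$ on the nonnegative integers, so that $Q$ also makes sense at negative arguments through its constituent polynomials.

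\textbf{Reduction via reciprocity.} First I would rewrite the left-hand side purely in terms of $Q$. Since $p_A$ is the Ehrhart quasi-polynomial of the $(k-1)$-dimensional simplex $\Delta=\{u\in\R_{\ge0}^k:\sum_i a_iu_i=1\}$, and the relative interior of its $m$-th dilate contains exactly $p_A(m-s)$ lattice points (substitute $u_i\mapsto u_i-1$), Ehrhart--Macdonald reciprocity yields $Q(-m)=(-1)^{k-1}p_A(m-s)$ for every integer $m$; equivalently this is the functional equation $F(1/z)=(-1)^kz^{s}F(z)$ for $F(z)=\prod_j(1-z^{a_j})^{-1}$ read off coefficientwise. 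Taking $m=x$, and using $p_A(P-x)=Q(P-x)$ (valid since $P-x\ge0$) together with $(-1)^k(-1)^{k-1}=-1$, the left-hand side of Theorem~\ref{MainThm3} collapses to $Q(P-x)-Q(-x)$. It therefore suffices to prove $Q(P-x)-Q(-x)=R(x)$ for $s\le x\le P$.

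\textbf{Evaluating the difference through Theorem~\ref{MainThm1}.} Both $P-x$ and $-x$ represent the class of $-x$ modulo $P$, so I would set $r:=P-x$ (with $0\le r<P$) and note $-x=(-1)\cdot P+r$. The right-hand side of Theorem~\ref{MainThm1} is a polynomial in the quotient $q$ that coincides with $Q(qP+r)-Q(r)$ for all $q\ge0$; hence it remains valid at $q=-1$, now as an identity for the constituent polynomial of the class $r$. Substituting $q=-1$, so that $n-r=-P$, $r-n=P$ and $-r=x-P$, produces
\begin{equation*}
Q(P-x)-Q(-x)=(-1)^kP\sum_{i=0}^{k-2}\frac{P^i}{(i+1)!\,(k-i-2)!}\,\mathcal{B}_{k-i-2}(x-P;a_1,\ldots,a_k).
\end{equation*}

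\textbf{Shifting the Bernoulli--Barnes argument.} Finally I would convert the argument $x-P$ into $x$ using the Appell-type translation rule $\mathcal{B}_{m}(x-P;a_1,\ldots,a_k)=\sum_{j=0}^m\binom{m}{j}(-P)^{m-j}\mathcal{B}_j(x;a_1,\ldots,a_k)$, which is immediate from its defining generating function. Inserting this, exchanging the order of summation, and gathering the coefficient of each $\mathcal{B}_j(x;a_1,\ldots,a_k)$ reduces the whole expression to the alternating sum $\sum_{l=0}^{N-1}\frac{(-1)^l}{(N-l)!\,l!}=\frac{(-1)^{N+1}}{N!}$ (a consequence of $\sum_{l=0}^N\binom{N}{l}(-1)^l=0$, taken with $N=k-1-j$). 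After this cancellation the right-hand side of the previous display becomes exactly $(-1)^kP\sum_{i=0}^{k-2}\frac{(-P)^i}{(i+1)!\,(k-i-2)!}\mathcal{B}_{k-i-2}(x;a_1,\ldots,a_k)=R(x)$, and the boundary value $x=P$ (where $r=0$) is covered verbatim. I expect the genuine obstacles to be, first, the justification that Theorem~\ref{MainThm1} may be evaluated at the negative quotient $q=-1$ --- which rests entirely on its being a polynomial identity in $q$ --- and, second, the bookkeeping of this last step, since it is precisely the reindexing together with the alternating binomial sum that reconcile the two superficially different closed forms carrying arguments $x-P$ and $x$.
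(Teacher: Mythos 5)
Your proposal is correct, but it takes a genuinely different route from the paper. The paper proves Theorem~\ref{MainThm3} by redoing the residue computation of Theorem~\ref{MainThm1} with one extra twist: writing $p_A(a_1\cdots a_k-x)=\Res_{\lambda=0}G(\lambda)$ for $G(\lambda)=\lambda^{x-1}(\lambda^{-a_1\cdots a_k}-1)\prod_i(1-\lambda^{a_i})^{-1}$, it notes that the residue at infinity no longer vanishes; the sum-of-residues identity then produces two terms, the residue at $\lambda=1$ (evaluated via $\lambda=e^s$, giving the Bernoulli--Barnes sum with argument $x$ directly) and the residue at $\lambda=\infty$ of the piece $-\lambda^{x-1}\prod_i(1-\lambda^{a_i})^{-1}$, which after the substitution $\lambda\mapsto\lambda^{-1}$ is recognized as $(-1)^{k+1}p_A(x-a_1-\cdots-a_k)$. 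You instead treat Theorem~\ref{MainThm1} as a black box: you convert the term $(-1)^kp_A(x-a_1-\cdots-a_k)$ into $-Q(-x)$ by Ehrhart--Macdonald (equivalently Stanley) reciprocity, observe that Theorem~\ref{MainThm1} is a polynomial identity in the quotient $q$ and hence may be evaluated at $q=-1$ to compute $Q(P-x)-Q(-x)$, and then reconcile the resulting argument $x-P$ with the stated argument $x$ via the Appell translation rule for $\mathcal{B}_m$ together with an alternating binomial cancellation; all three steps check out (in particular your identity $\sum_{l=0}^{N-1}\tfrac{(-1)^l}{(N-l)!\,l!}=\tfrac{(-1)^{N+1}}{N!}$ is exactly what makes the two closed forms agree). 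What the paper's approach buys is self-containment and directness: the formula appears with argument $x$ immediately, and no external theory is needed. What your approach buys is conceptual clarity and economy: it exhibits Theorem~\ref{MainThm3} as ``Theorem~\ref{MainThm1} continued to $q=-1$, corrected by reciprocity,'' it requires no new residue computation, and it yields Theorem~\ref{MainThm2} for free (when $1\le x\le a_1+\cdots+a_k-1$ the interior count, hence the reciprocity term, vanishes). It is worth noting that the paper's residue-at-infinity step, with its substitution $\lambda\mapsto\lambda^{-1}$, is precisely the functional equation $F(1/z)=(-1)^kz^{a_1+\cdots+a_k}F(z)$ underlying the reciprocity you invoke, so the two arguments are cousins; the real organizational difference is that you outsource the analytic work to Theorem~\ref{MainThm1} at the price of the Bernoulli--Barnes shift bookkeeping, while the paper pays by repeating the residue analysis.
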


This paper is organized as follows.
In Section 2, we introduce some necessary notations and provide the proof of Theorem \ref{MainThm1}.
In Section 3, we give a recursive formula for $p_A(n)-p_A(r)$, where $0\leq r<a_1a_2\cdots a_k$.
Sections 4 and 5 give the proofs of Theorems \ref{MainThm2} and \ref{MainThm3}, respectively.
Throughout this paper, $\mathbb{C}$, $\mathbb{N}$, and $\mathbb{P}$ denote the set of all complex numbers, all nonnegative integers, and all positive integers, respectively.

\section{The Proof of Theorem \ref{MainThm1}}

Before obtaining the main results of this section, we need to introduce some definitions and conclusions.
Let $f(\lambda)$ be a rational function in $\mathbb{C}((\lambda))$.
The $\mathrm{CT}_{\lambda}f(\lambda)$ denotes the constant term of the Laurent series expansion of $f(\lambda)$ at $\lambda=0$.
The $\mathrm{Res}_{\lambda=\lambda_0}f(\lambda)$ denotes the residue of $f(\lambda)$ when expanded as a Laurent series at $\lambda=\lambda_0$. More precisely, we have
$$\mathop{\mathrm{Res}}\limits_{\lambda=\lambda_0}\sum_{i\geq i_0}c_i(\lambda-\lambda_0)^i=c_{-1}.$$

For the denumerant $p_{A}(n)$ with $A=\{a_1,a_2,...,a_k\}$, we have
\begin{align}\label{CTDenumer}
p_{A}(n)=\sum_{x_i\geq 0}\mathop{\mathrm{CT}}\limits_{\lambda}\lambda^{x_1a_1+x_2a_2+\cdots +x_ka_k-n}
=\mathop{\mathrm{CT}}\limits_{\lambda}\frac{\lambda^{-n}}{(1-\lambda^{a_1})(1-\lambda^{a_2})\cdots (1-\lambda^{a_k})}.
\end{align}

\begin{lem}[\cite{Jacobi}]\label{ResMeromor}
Let $c$ be a complex number. Suppose $g(s)$ is holomorphic in a neighborhood of $s=c$ and suppose $f(\lambda)$ is meromorphic in a neighborhood of $\lambda=g(c)$. If $g^{\prime}(c)\neq 0$, then
$$\mathop{\mathrm{Res}}\limits_{\lambda=g(c)}f(\lambda)=\mathop{\mathrm{Res}}\limits_{s=c}f(g(s))g^{\prime}(s).$$
\end{lem}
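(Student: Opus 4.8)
The plan is to reduce the identity to the change-of-variables invariance of residues, working entirely with Laurent series so that no contour integration is needed. First I would note that the hypotheses make both sides well defined: since $g'(c)\neq 0$, the inverse function theorem shows $s\mapsto g(s)$ is a local biholomorphism carrying a small neighborhood of $c$ onto one of $g(c)$, so $c$ is the only preimage of $g(c)$ nearby; consequently $f(g(s))g'(s)$ is holomorphic in a punctured neighborhood of $s=c$ with at worst a pole there, and $\Res_{s=c}f(g(s))g'(s)$ makes sense.

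Next I would pass to local coordinates by setting $u=\lambda-g(c)$, $t=s-c$, and $\phi(t)=g(c+t)-g(c)$, so that $\phi$ is a power series with $\phi(0)=0$ and $\phi'(0)=g'(c)\neq 0$. Writing the Laurent expansion $f(g(c)+u)=\sum_{n}c_n u^n$, the statement becomes $\Res_{t=0}f(g(c)+\phi(t))\phi'(t)=c_{-1}$. Because $f$ is meromorphic at $g(c)$, the principal part $\sum_{n<0}c_nu^n$ is finite, so substituting the series and using linearity of $\Res$ reduces the problem to evaluating $\Res_{t=0}\phi(t)^n\phi'(t)$ term by term, with no convergence issue beyond standard Laurent-series algebra.

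The core computation then splits into two cases. For $n\neq -1$ the integrand is an exact derivative, $\phi^n\phi'=\tfrac{1}{n+1}\tfrac{d}{dt}\phi^{n+1}$, and the derivative of any Laurent series has vanishing $t^{-1}$-coefficient, so its residue is $0$. For $n=-1$, writing $\phi(t)=g'(c)\,t\,\psi(t)$ with $\psi$ a power series satisfying $\psi(0)=1$, the logarithmic derivative gives $\phi'/\phi=t^{-1}+\psi'/\psi$, whose residue is $1$ since $\psi'/\psi$ is holomorphic at $0$. Summing, only the $n=-1$ term survives and contributes exactly $c_{-1}$, which is the claimed equality.

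The main point to get right is that these formal manipulations are genuinely legitimate, and this is where the hypotheses earn their keep. The condition $g'(c)\neq 0$ is precisely what makes $\phi(t)^n$ an honest Laurent series for negative $n$, since $\psi(0)=1$ forces $\psi^n$ to be a power series; and meromorphy of $f$ keeps the principal part finite, so interchanging the summation with $\Res$ requires no analytic estimate. I expect this justification, rather than the two residue evaluations, to be the only delicate step. An alternative analytic route would realize each side as $\tfrac{1}{2\pi i}$ times a contour integral over a small positively oriented circle and substitute $\lambda=g(s)$, $d\lambda=g'(s)\,ds$; there the only subtlety is checking that the holomorphic local biholomorphism $g$ sends the contour to an orientation-preserving loop around $c$, a bookkeeping step that the formal approach avoids entirely.
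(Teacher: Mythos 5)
Your proof is correct, but there is nothing in the paper to compare it against: the paper does not prove this lemma at all. It is quoted as a classical result, with a citation to Jacobi's 1830 paper, and is then used as a black box in the proofs of Theorems 1.1 and 1.3. What you have supplied is the standard Laurent-series proof of the residue change-of-variables formula, and it is sound on the two points that genuinely need care: the factorization $\phi(t)=g'(c)\,t\,\psi(t)$ with $\psi(0)=1$, which is exactly where the hypothesis $g'(c)\neq 0$ enters (it makes $\phi^{n}$ an honest Laurent series for $n<0$ and makes $\psi'/\psi$ holomorphic at $0$, so that $\Res_{t=0}\phi'/\phi=1$), and the exactness argument $\phi^{n}\phi'=\tfrac{1}{n+1}(\phi^{n+1})'$ killing all $n\neq -1$. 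The one step you state a bit loosely is the interchange of $\Res$ with the infinite sum over the regular part $\sum_{n\geq 0}c_nu^n$: rather than summing term by term there, it is cleanest to treat that block as a single holomorphic function $h(u)$, so that $h(\phi(t))\phi'(t)$ is holomorphic at $t=0$ and contributes zero residue; alternatively, observe that $\phi^{n}\phi'$ vanishes to order at least $n$ at $t=0$, so the family is summable as formal Laurent series and coefficient extraction commutes with the sum. Either patch is one line. The net effect of your proposal is to make the paper self-contained at this point, at the cost of half a page, where the authors chose brevity via citation; your argument also makes visible why the hypothesis $g'(c)\neq 0$ cannot be dropped, which the paper's bare statement does not.
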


\begin{lem}\label{ResNonegit}
Let $r_1,r_2, \dots, r_k \in \mathbb{P}$ and $b \le r_1 +r_2+ \cdots + r_k - 1$. Suppose
$$f(z)=\frac{z^{b-1}}{(z-\xi_1)^{r_1}(z-\xi_2)^{r_2}\cdots (z-\xi_k)^{r_k}}.$$
Then
$$\Res_{z=0} f(z) = -\sum_{i=1}^k   \Res_{z=\xi_i} f(z).$$
\end{lem}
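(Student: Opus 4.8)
The plan is to read the identity as an instance of the classical fact that the residues of a rational function sum to zero once the point at infinity is taken into account. Here $f(z)$ is rational, and its only finite poles lie among $z=0,\xi_1,\dots,\xi_k$ (I assume, as in the intended application, that $\xi_1,\dots,\xi_k$ are distinct and nonzero, so that these are genuinely the complete list of finite poles). First I would fix a radius $R$ large enough that all of these poles lie inside the circle $|z|=R$, and apply the residue theorem to write
\[
\frac{1}{2\pi i}\oint_{|z|=R} f(z)\,dz = \Res_{z=0}f(z)+\sum_{i=1}^k \Res_{z=\xi_i}f(z).
\]

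The crux is then to show that the left-hand side vanishes as $R\to\infty$, and this is exactly where the hypothesis $b\le r_1+\cdots+r_k-1$ enters. Comparing degrees, the numerator $z^{b-1}$ has degree $b-1$ while the denominator has degree $r_1+\cdots+r_k$, so on $|z|=R$ one has $|f(z)| = O\!\left(R^{\,b-1-(r_1+\cdots+r_k)}\right)$. The hypothesis gives $b-1-(r_1+\cdots+r_k)\le -2$, hence $|f(z)|=O(R^{-2})$. The standard $ML$-estimate then yields $\bigl|\oint_{|z|=R} f\bigr|\le 2\pi R\cdot O(R^{-2})=O(R^{-1})\to 0$, so the contour integral tends to $0$ and the sum of all finite residues is forced to vanish. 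Rearranging gives $\Res_{z=0}f(z)=-\sum_{i=1}^k\Res_{z=\xi_i}f(z)$, as claimed.

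An equivalent and perhaps cleaner phrasing would avoid the explicit contour by invoking the residue-at-infinity formula $\Res_{z=\infty}f=-\Res_{w=0}w^{-2}f(1/w)$ together with the global residue relation $\Res_{z=0}f+\sum_i\Res_{z=\xi_i}f+\Res_{z=\infty}f=0$. Substituting $z=1/w$ gives $w^{-2}f(1/w)=w^{\,r_1+\cdots+r_k-b-1}\big/\prod_i(1-\xi_i w)^{r_i}$, and the hypothesis $b\le r_1+\cdots+r_k-1$ is precisely what makes the exponent of $w$ nonnegative, so this expression is holomorphic at $w=0$ and $\Res_{z=\infty}f=0$; the result then follows immediately. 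I would most likely present the contour-integral version, since it makes the role of the growth condition most transparent.

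The only real obstacle I anticipate is bookkeeping rather than substance. One must state explicitly that $\xi_1,\dots,\xi_k$ are distinct and nonzero, so that $0,\xi_1,\dots,\xi_k$ exhaust the finite poles (otherwise residues at coinciding points would have to be merged). One should also observe that the argument is insensitive to the sign of $b-1$: no case split is needed between $b\ge 1$, where $z=0$ is not actually a pole and $\Res_{z=0}f=0$, and $b\le 0$, where it is. With these points addressed, the single degree inequality does all the work.
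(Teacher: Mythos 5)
Your proposal is correct, and your second (``equivalent'') phrasing via $\Res_{z=\infty}f=-\Res_{w=0}w^{-2}f(1/w)$ together with the global residue relation is precisely the paper's own proof: the paper substitutes $z\mapsto z^{-1}$, obtains $-z^{\,r_1+\cdots+r_k-b-1}\big/\prod_i(1-\xi_i z)^{r_i}$, and notes that the degree hypothesis makes this a power series, so the residue at infinity vanishes. Your primary contour-integral version with the $ML$-estimate is just a more analytic rendering of the same fact, and your remark that the $\xi_i$ must be distinct and nonzero is a reasonable reading of a hypothesis the paper leaves implicit.
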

\begin{proof}
A well-known result in residue computation asserts that
$$\Res_{z=\infty} f(z) + \Res_{z=0} f(z) + \sum_{i=1}^k   \Res_{z=\xi_i} f(z) = 0.$$
The lemma then follows by showing that $\Res\limits_{z=\infty} f(z) = 0$.
Direct computation gives
\begin{align*}
\Res\limits_{z=\infty} f(z) &=  \Res\limits_{z=0} f(z^{-1}) \cdot (-z^{-2}) \\
 &= \Res\limits_{z=0} \frac{-z^{-b-1}}{(z^{-1}-\xi_1)^{r_1}\cdots (z^{-1}-\xi_k)^{r_k}} 
 = \Res\limits_{z=0} \frac{-z^{r_1 + \cdots + r_k-b-1}}{(1-z \xi_1)^{r_1}\cdots (1- z\xi_k)^{r_k}}. \\
\end{align*}
Since $b \le r_1 + \cdots + r_k - 1$, the expansion of $\frac{-z^{r_1 + \cdots + r_k-b-1}}{(1-z \xi_1)^{r_1}\cdots (1- z\xi_k)^{r_k}}$ is a power series in $z$. Therefore, its residue at $z=0$ is $0$. 
This completes the proof.
\end{proof}

For $a_1,a_2,\ldots,a_k \in \mathbb{P}$, the \emph{Bernoulli-Barnes polynomials} $\mathcal{B}_i(x;a_1,a_2,\ldots,a_k)$
are polynomials in $x$ defined by
\begin{align}\label{Bernoulli-Barnes}
\frac{s^ke^{xs}}{(e^{a_1s}-1)(e^{a_2s}-1)\cdots (e^{a_ks}-1)}
=\sum_{i\geq 0}\mathcal{B}_i(x;a_1,a_2,\ldots,a_k)\frac{s^i}{i!}.
\end{align}

\begin{proof}[Proof of Theorem \ref{MainThm1}]
By Equation \eqref{CTDenumer}, we have
\begin{align*}
p_A(n)-p_A(r)&=\mathop{\mathrm{CT}}\limits_{\lambda}\frac{\lambda^{-n}}{(1-\lambda^{a_1})(1-\lambda^{a_2})\cdots (1-\lambda^{a_k})}
-\mathop{\mathrm{CT}}\limits_{\lambda}\frac{\lambda^{-r}}{(1-\lambda^{a_1})(1-\lambda^{a_2})\cdots (1-\lambda^{a_k})}
\\&=\mathop{\mathrm{CT}}\limits_{\lambda}\frac{\lambda^{-r}(\lambda^{-qa_1a_2\cdots a_k}-1)}{(1-\lambda^{a_1})(1-\lambda^{a_2})\cdots (1-\lambda^{a_k})}.
\end{align*}
For convenience, let
$$F(\lambda)=\frac{\lambda^{-r-1}(\lambda^{-qa_1a_2\cdots a_k}-1)}{(1-\lambda^{a_1})(1-\lambda^{a_2})\cdots (1-\lambda^{a_k})}.$$
Then
\begin{align*}
p_A(n)-p_A(r)=\mathop{\mathrm{CT}}\limits_{\lambda}\lambda F(\lambda)
=\mathop{\mathrm{Res}}\limits_{\lambda=0}F(\lambda)
=-\sum_{\xi}\mathop{\mathrm{Res}}\limits_{\lambda=\xi}F(\lambda),\ \ \ (\text{By Lemma \ref{ResNonegit}}.)
\end{align*}
where $\xi$ ranges over all nonzero poles of $F(\lambda)$. We claim that $\mathrm{Res}_{\lambda=\xi}F(\lambda)=0$ unless $\xi=1$.
Since $a_1,a_2,\ldots,a_k$ are pairwise relatively prime positive integers, each $\xi\neq 1$ 
appears exactly once in the denominator, but the numerator also vanishes at these $\xi$'s. 
Therefore, we obtain
\begin{align}\label{CTFGNeed}
p_A(n)-p_A(r)&=-\mathop{\mathrm{Res}}\limits_{\lambda=1}F(\lambda)
=-\mathop{\mathrm{Res}}\limits_{s=0}F(e^s)e^s\ \ \ \ \ (\text{By Lemma \ref{ResMeromor}}.)\nonumber
\\&=-\mathop{\mathrm{CT}}\limits_{s}F(e^s)e^s s
\\&=-\mathop{\mathrm{CT}}\limits_{s}\frac{e^{-rs}(e^{-qa_1a_2\cdots a_ks}-1)s}{(1-e^{a_1s})(1-e^{a_2s})\cdots (1-e^{a_ks})}\nonumber
\\&=(-1)^kqa_1a_2\cdots a_k \mathop{\mathrm{CT}}\limits_{s}\frac{1}{s^{k-2}}\cdot \frac{e^{-qa_1a_2\cdots a_ks}-1}{-qa_1a_2\cdots a_ks}\cdot \frac{s^ke^{-rs}}{\prod_{i=1}^k(e^{a_is}-1)}\nonumber
\\&=(-1)^kqa_1a_2\cdots a_k [s^{k-2}]\sum_{i\geq 0}\frac{(-qa_1a_2\cdots a_k)^i}{(i+1)!}s^i
\cdot \sum_{j\geq 0}\mathcal{B}_j(-r;a_1,a_2,\ldots,a_k)\frac{s^j}{j!}\nonumber
\\&=(-1)^k(n-r)\sum_{i=0}^{k-2}\frac{(r-n)^i}{(i+1)!(k-i-2)!}\mathcal{B}_{k-i-2}(-r;a_1,a_2,\ldots,a_k).\nonumber
\end{align}
This completes the proof.
\end{proof}

\begin{cor}[\cite{Ehrhart65,Ehrhart66}]\label{Corr23}
Following the notation in Theorem \ref{MainThm1}.
If $A=\{a_1,a_2\}$, then
$$p_A(n)=p_A(r)+\frac{n-r}{a_1a_2}.$$
If $A=\{a_1,a_2,a_3\}$, then
$$p_A(n)=p_A(r)+\frac{q(n+r+a_1+a_2+a_3)}{2}.$$
\end{cor}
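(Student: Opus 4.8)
The plan is to specialize Theorem \ref{MainThm1} to $k=2$ and $k=3$, which reduces the whole task to evaluating the handful of low-degree Bernoulli--Barnes polynomials $\mathcal{B}_j(-r;a_1,\ldots,a_k)$ that survive in the finite sum. Since the only input available is the defining generating function \eqref{Bernoulli-Barnes}, everything follows from expanding that generating function in $s$ to the order dictated by the range $0\le i\le k-2$.

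First, for $k=2$ the sum in Theorem \ref{MainThm1} has a single term ($i=0$), so I only need $\mathcal{B}_0(-r;a_1,a_2)$. Writing $e^{a_js}-1=a_js\,(1+O(s))$ in \eqref{Bernoulli-Barnes} and letting $s\to 0$ gives $\mathcal{B}_0(x;a_1,a_2)=1/(a_1a_2)$, independent of $x$. Substituting this together with $(-1)^2=1$ and the factorials $(i+1)!(k-i-2)!=1!\,0!=1$ yields $p_A(n)=p_A(r)+(n-r)/(a_1a_2)$ immediately.

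For $k=3$ the sum has two terms ($i=0,1$), so I need both $\mathcal{B}_0$ and $\mathcal{B}_1$. Factoring $\prod_{j=1}^3(e^{a_js}-1)=a_1a_2a_3\,s^3\prod_{j}\bigl(1+\tfrac{a_j}{2}s+O(s^2)\bigr)$ and expanding $e^{xs}=1+xs+O(s^2)$ in \eqref{Bernoulli-Barnes}, I would read off $\mathcal{B}_0(x;a_1,a_2,a_3)=1/(a_1a_2a_3)$ and $\mathcal{B}_1(x;a_1,a_2,a_3)=\bigl(x-\tfrac{a_1+a_2+a_3}{2}\bigr)/(a_1a_2a_3)$. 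Setting $x=-r$, using $r-n=-qa_1a_2a_3$, and plugging into Theorem \ref{MainThm1} with $(-1)^3=-1$, the bracketed expression collapses; replacing $qa_1a_2a_3$ by $n-r$ in the remaining algebra combines the terms into $\tfrac12 q(n+r+a_1+a_2+a_3)$, as claimed.

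There is no genuine obstacle here: once the two Bernoulli--Barnes values are in hand, the corollary is a direct substitution into the already-proved Theorem \ref{MainThm1}. The only point requiring care is the bookkeeping---expanding the generating function to exactly the needed order (constant term for $k=2$, constant and linear terms for $k=3$), and correctly tracking the sign $(-1)^k$ together with the factorials $(i+1)!(k-i-2)!$ so that the pieces $2r$, $a_1+a_2+a_3$, and $n-r=qa_1a_2a_3$ assemble into the symmetric sum $n+r+a_1+a_2+a_3$.
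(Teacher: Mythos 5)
Your proposal is correct and follows essentially the same route as the paper: specialize Theorem \ref{MainThm1} to $k=2,3$ and read off the needed Bernoulli--Barnes values by expanding the generating function \eqref{Bernoulli-Barnes} to the required order (the paper does this directly at $x=-r$, obtaining $\frac{1}{a_1a_2a_3}-\frac{2r+a_1+a_2+a_3}{2a_1a_2a_3}s+o(s^2)$ for $k=3$, which matches your $\mathcal{B}_0$ and $\mathcal{B}_1$). The final algebraic substitution, including the sign $(-1)^k$ and the replacement $n-r=qa_1a_2a_3$, checks out exactly as you describe.
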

\begin{proof}
By
\begin{align*}
\frac{s^2e^{-rs}}{(e^{a_1s}-1)(e^{a_2s}-1)}=\frac{1}{a_1a_2}+o(s)
\end{align*}
and
\begin{align*}
\frac{s^3e^{-rs}}{(e^{a_1s}-1)(e^{a_2s}-1)(e^{a_3s}-1)}=\frac{1}{a_1a_2a_3}-\frac{2r+a_1+a_2+a_3}{2a_1a_2a_3}s+o(s^2),
\end{align*}
the corollary follows from Theorem \ref{MainThm1}.
\end{proof}

\begin{cor}\label{Corr45}
Following the notation in Theorem \ref{MainThm1}.
If $A=\{a_1,a_2,a_3,a_4\}$, then
\begin{align*}
p_A(n)=p_A(r)+\frac{q}{12}\Big(&3(n+r)(a_1+a_2+a_3+a_4)+2(n+r)^2-2nr+(a_1+a_2+a_3+a_4)^2
\\&\ \ +a_1a_2+a_1a_3+a_1a_4+a_2a_3+a_2a_4+a_3a_4\Big).
\end{align*}
If $A=\{a_1,a_2,a_3,a_4,a_5\}$, then
\begin{align*}
p_A(n)=&p_A(r)+\frac{q}{24}\bigg((n+r)(n^2+r^2)+(2n^2+2nr+2r^2)\Big(\sum_{i=1}^5a_i\Big)
+(n+r)\Big(\sum_{i=1}^5a_i^2\Big)
\\&+\sum_{i=1}^5a_i^2\Big(\sum_{j=1}^5a_j-a_i\Big)+3(n+r)\sum_{1\leq i<j\leq 5}a_ia_j+3\sum_{1\leq i<j\leq 5}\frac{a_1a_2a_3a_4a_5}{a_ia_j}\bigg).
\end{align*}
\end{cor}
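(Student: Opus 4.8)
The plan is to prove both identities by exactly the computation used for Corollary \ref{Corr23}: extract the needed low-order Taylor coefficients of the Bernoulli-Barnes generating function \eqref{Bernoulli-Barnes} at $x=-r$ and substitute them into Theorem \ref{MainThm1}, using $n-r=q\,a_1a_2\cdots a_k$ throughout to clear denominators.

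Concretely, for $k=4$ the sum in Theorem \ref{MainThm1} runs over $i=0,1,2$, so I need $\mathcal{B}_0(-r)$, $\mathcal{B}_1(-r)$, and $\mathcal{B}_2(-r)$ with the four parameters $a_1,a_2,a_3,a_4$; for $k=5$ the sum runs over $i=0,1,2,3$, so I need $\mathcal{B}_0(-r),\ldots,\mathcal{B}_3(-r)$ with the five parameters. These are read off as the coefficients of $s^0,\ldots,s^{k-2}$ in
$$\frac{s^k e^{-rs}}{\prod_{i=1}^k(e^{a_is}-1)}=\prod_{i=1}^k\frac{s}{e^{a_is}-1}\cdot e^{-rs}.$$
To perform the expansion I would use $\frac{t}{e^t-1}=\sum_{m\ge 0}B_m\frac{t^m}{m!}$, so that each factor is $\frac{s}{e^{a_is}-1}=\frac{1}{a_i}-\frac{s}{2}+\frac{a_i}{12}s^2+0\cdot s^3-\cdots$; the vanishing of $B_3$ removes the cubic term and noticeably lightens the $k=5$ case. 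Multiplying the $k$ truncated series, then multiplying by $e^{-rs}=\sum_{m\ge 0}(-r)^m s^m/m!$, and collecting coefficients up to $s^{k-2}$ yields each $\mathcal{B}_j(-r)$ as a polynomial in $r$ whose coefficients are symmetric expressions in the $a_i$, all carrying a factor $1/(a_1\cdots a_k)$.

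Finally I would substitute these values into
$$p_A(n)-p_A(r)=(-1)^k(n-r)\sum_{i=0}^{k-2}\frac{(r-n)^i}{(i+1)!(k-i-2)!}\mathcal{B}_{k-i-2}(-r;a_1,\ldots,a_k),$$
replacing $n-r$ and $r-n$ by $+q\,a_1\cdots a_k$ and $-q\,a_1\cdots a_k$ respectively. The leading factor $n-r=q\,a_1\cdots a_k$ cancels the $1/(a_1\cdots a_k)$ sitting inside the Bernoulli-Barnes values, leaving the overall $q$ displayed in the statement, after which what remains is a polynomial identity in $n,r$ and the $a_i$ that should match the two formulas.

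The only genuine difficulty is bookkeeping rather than concept: in the five-parameter case the coefficient of $s^3$ in a product of five series, further multiplied by $e^{-rs}$, produces a large number of monomials, and the final answer must be regrouped into the specific symmetric combinations appearing in the statement, such as $\sum_{i}a_i^2\big(\sum_j a_j-a_i\big)$ and $\sum_{i<j}\frac{a_1a_2a_3a_4a_5}{a_ia_j}$. I expect no obstacle beyond organizing these elementary symmetric-function manipulations carefully; a convenient consistency check is that the $n,r$-degree of each formula equals $k-1$, as forced by the quasi-polynomial degree of $p_A$, and that the coefficient structure specializes correctly in the symmetric case where all $a_i$ are equal.
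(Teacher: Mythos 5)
Your proposal is correct and is essentially the paper's own proof: the paper states that Corollary \ref{Corr45} follows ``analogously to Corollary \ref{Corr23},'' i.e.\ by Taylor-expanding $\frac{s^k e^{-rs}}{\prod_{i=1}^k(e^{a_is}-1)}$ to order $s^{k-2}$ to read off the Bernoulli--Barnes values $\mathcal{B}_j(-r;a_1,\ldots,a_k)$ and substituting them into Theorem \ref{MainThm1} with $n-r=q\,a_1\cdots a_k$. Your organization of the expansion via the factors $\frac{s}{e^{a_is}-1}=\frac{1}{a_i}-\frac{s}{2}+\frac{a_i}{12}s^2+\cdots$ is exactly the intended bookkeeping, so no gap remains.
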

The proof of Corollary \ref{Corr45} is analogous to that of Corollary \ref{Corr23} and is left to the reader.

\section{A Recursive Formula for $p_A(n)-p_A(r)$}

Readers familiar with symmetric functions may find that the formulas in Corollary \ref{Corr45} are related to the power sum symmetric function. 
This does not occur occasionally. We will describe this connection in general. We will also give a recursive formula for $p_A(n)-p_A(r)$
when $n-r=qa_1\cdots a_k$ as in Section 2.

We need the following definitions.
The \emph{$m$-th power sum symmetric function} is
$$p_m(x_1,x_2,\ldots)=\sum_{i\geq 1}x_i^m.$$
We only use the symmetric functions on a finite number of variables, say $x_1,\dots, x_k$. One can treat $x_i=0$ for $i>k$.
We have $p_m(x_1,x_2,\ldots,x_k)=\sum_{i=1}^kx_i^m$.
The \emph{Bernoulli numbers} $\mathcal{B}_i$ are defined by
$$\frac{s}{e^s-1}=1-\mathcal{B}_1s+\sum_{i\geq 2}\mathcal{B}_i\frac{s^i}{i!}=1-\frac{1}{2}s+\frac{1}{12}s^2-\frac{1}{720}s^4+\cdots.$$
Then
$$\ln \frac{s}{e^s-1}=-\sum_{i\geq 1}\frac{\mathcal{B}_i}{i!\cdot i}s^i=-\frac{1}{2}s-\frac{1}{24}s^2+\frac{1}{2880}s^4+\cdots.$$

By the proof of Theorem \ref{MainThm1}, we have
\begin{align*}
p_A(n)-p_A(r)&=(-1)^kq \mathop{\mathrm{CT}}\limits_{s}\frac{1}{s^{k-2}}\cdot e^{-rs}\cdot \frac{e^{-qa_1a_2\cdots a_ks}-1}{-qa_1a_2\cdots a_ks}\cdot \prod_{i=1}^k\frac{a_is}{e^{a_is}-1}.
\end{align*}
We consider the following formula. The technique for taking logarithms below comes from \cite{XinZhangZhang2024}.
\begin{align*}
h(s)&=\ln \left(e^{-rs}\cdot \frac{e^{-qa_1a_2\cdots a_ks}-1}{-qa_1a_2\cdots a_ks}\cdot \prod_{i=1}^k\frac{a_is}{e^{a_is}-1}\right)
\\&=-rs+\ln \left(\frac{-qa_1a_2\cdots a_ks}{e^{-qa_1a_2\cdots a_ks}-1}\right)^{-1}+\sum_{i=1}^k\ln \frac{a_is}{e^{a_is}-1}
\\&=-rs+\sum_{i\geq 1}\frac{\mathcal{B}_i}{i!\cdot i}(-qa_1a_2\cdots a_k)^is^i+\sum_{j\geq 1}\frac{-\mathcal{B}_jp_j(a_1,a_2,\ldots,a_k)}{j!\cdot j}s^j
\\&=-rs+\sum_{i\geq 1}\frac{\mathcal{B}_i}{i!\cdot i}((r-n)^i-p_i(a_1,a_2,\ldots,a_k))s^i.
\end{align*}
Then
\begin{align*}
p_A(n)-p_A(r)=(-1)^kq \mathop{\mathrm{CT}}\limits_{s}\frac{1}{s^{k-2}}\cdot e^{h(s)}=(-1)^kq[s^{k-2}]e^{h(s)}.
\end{align*}
Let $f(s)=e^{h(s)}=\sum_{i\geq 0}f_is^i$.
By
$$f^{\prime}(s)=e^{h(s)}\cdot h^{\prime}(s)=f(s)\cdot h^{\prime}(s),$$
we have
$$\sum_{i\geq 0}if_is^{i-1}=\sum_{i\geq 0}(f_is^i)\cdot \sum_{i\geq 0}(h_i^{\prime}s^i),$$
that is
$$f_0=1,\ \ f_i=\frac{1}{i}\cdot \sum_{j=1}^{i}f_{i-j}h_{j-1}^{\prime},\ \ (i\geq 1).$$
Therefore, we have
$$p_A(n)-p_A(r)=(-1)^kq\cdot f_{k-2}.$$

We summarize the above discussion as follows.
\begin{thm}
Let $A=\{a_1,a_2, \ldots, a_k\}$, where $a_1,a_2,\ldots,a_k$ are pairwise relatively prime positive integers.
Let $n=q\cdot a_1a_2\cdots a_k+r$ with $0\leq r<a_1a_2\cdots a_k$.
Suppose
$$h(s)=-rs+\sum_{i\geq 1}\frac{\mathcal{B}_i}{i!\cdot i}((r-n)^i-p_i(a_1,a_2,\ldots,a_k))s^i,\ \ \ \ h^{\prime}(s)=\sum_{i\geq 0}h^{\prime}_is^i.$$
Then
$$p_{A}(n)=p_{A}(r)+(-1)^kq\cdot f_{k-2},$$
where $f_i$ can be recursively obtained by
$$f_0=1,\ \ \ \ f_i=\frac{1}{i}\cdot \sum_{j=1}^{i}f_{i-j}h_{j-1}^{\prime},\ \ (i\geq 1).$$
\end{thm}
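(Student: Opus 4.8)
The plan is to reuse the constant-term computation from the proof of Theorem \ref{MainThm1} and then repackage the coefficient extraction as a logarithmic-derivative recursion. Writing $P=a_1a_2\cdots a_k$, the intermediate line of \eqref{CTFGNeed} can be rearranged, using $\tfrac{s^kP}{\prod_{i=1}^k(e^{a_is}-1)}=\prod_{i=1}^k\tfrac{a_is}{e^{a_is}-1}$, into
$$p_A(n)-p_A(r)=(-1)^kq\,\CT_{s}\frac{1}{s^{k-2}}\cdot e^{-rs}\cdot\frac{e^{-qPs}-1}{-qPs}\cdot\prod_{i=1}^k\frac{a_is}{e^{a_is}-1}.$$
Each of the factors $\tfrac{a_is}{e^{a_is}-1}$ and $\tfrac{e^{-qPs}-1}{-qPs}$ is a formal power series in $s$ with constant term $1$, and $e^{-rs}$ likewise, so their product $f(s)$ is a power series with $f(0)=1$. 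Consequently $\CT_{s}\,s^{-(k-2)}f(s)=[s^{k-2}]f(s)$, whence $p_A(n)-p_A(r)=(-1)^kq[s^{k-2}]f(s)$.

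The step carrying the actual content is passing to $h(s)=\ln f(s)$. Using the expansion $\ln\frac{s}{e^s-1}=-\sum_{i\geq1}\frac{\mathcal{B}_i}{i!\cdot i}s^i$ recorded above, I would expand the three logarithmic contributions separately: the term $-rs$ from $e^{-rs}$; the series $\sum_{i\geq1}\frac{\mathcal{B}_i}{i!\cdot i}(-qP)^is^i$ from $\tfrac{e^{-qPs}-1}{-qPs}$, obtained by substituting $s\mapsto -qPs$ and using $\ln(x^{-1})=-\ln x$; and $-\sum_{j\geq1}\frac{\mathcal{B}_j}{j!\cdot j}p_j(a_1,\ldots,a_k)s^j$ from $\prod_{i=1}^k\tfrac{a_is}{e^{a_is}-1}$, where $\sum_{i=1}^k(a_is)^j$ collapses to $p_j(a_1,\ldots,a_k)s^j$. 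Combining these and using $-qP=r-n$ assembles exactly the stated closed form for $h(s)$.

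It remains to extract $[s^{k-2}]f(s)$ recursively. Since $f=e^h$ satisfies $f'=f\,h'$, writing $f(s)=\sum_{i\geq0}f_is^i$ and $h'(s)=\sum_{i\geq0}h'_is^i$ and comparing the coefficient of $s^{i-1}$ in the identity $\sum_{i\geq1}if_is^{i-1}=\bigl(\sum_{i\geq0}f_is^i\bigr)\bigl(\sum_{i\geq0}h'_is^i\bigr)$ gives $if_i=\sum_{j=1}^{i}f_{i-j}h'_{j-1}$, together with $f_0=f(0)=1$; this is the claimed recursion. Setting $i=k-2$ then yields $p_A(n)-p_A(r)=(-1)^kqf_{k-2}$, as required. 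The only real care is needed in the bookkeeping of the logarithmic expansion — tracking the sign convention attached to $\mathcal{B}_1$ in the chosen normalization and checking that the three contributions fuse into the single series for $h(s)$. The recursion step itself is the standard Newton-type identity for the exponential of a power series and poses no difficulty.
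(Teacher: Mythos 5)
Your proposal is correct and follows essentially the same route as the paper: it starts from the constant-term expression in the proof of Theorem \ref{MainThm1}, takes the logarithm of the product using the expansion of $\ln\frac{s}{e^s-1}$ to assemble $h(s)$, and extracts $[s^{k-2}]e^{h(s)}$ via the standard $f'=fh'$ recursion. The sign bookkeeping you flag (the $\mathcal{B}_1$ convention and the inversion giving $+\sum_{i\geq 1}\frac{\mathcal{B}_i}{i!\cdot i}(-qP)^is^i$) matches the paper's computation exactly.
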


\section{The Proof of Theorem \ref{MainThm2}}

\begin{proof}[Proof of Theorem \ref{MainThm2}]
By Equation \eqref{CTDenumer}, we have
\begin{align*}
p_A(a_1a_2\cdots a_k-x)&=\mathop{\mathrm{CT}}\limits_{\lambda}\frac{\lambda^{-a_1a_2\cdots a_k+x}}{(1-\lambda^{a_1})(1-\lambda^{a_2})\cdots (1-\lambda^{a_k})}.
\end{align*}
The rational function
$$\frac{\lambda^{x}}{(1-\lambda^{a_1})(1-\lambda^{a_2})\cdots (1-\lambda^{a_k})}$$
is a proper rational function since $1\leq x\leq a_1+a_2+\cdots +a_k-1$. Obviously, its constant term is $0$.
We have
\begin{align*}
p_A(a_1a_2\cdots a_k-x)&=\mathop{\mathrm{CT}}\limits_{\lambda}\frac{\lambda^{-a_1a_2\cdots a_k+x}-\lambda^{x}}{(1-\lambda^{a_1})(1-\lambda^{a_2})\cdots (1-\lambda^{a_k})}
=\mathop{\mathrm{CT}}\limits_{\lambda}\frac{\lambda^{x}(\lambda^{-a_1a_2\cdots a_k}-1)}{(1-\lambda^{a_1})(1-\lambda^{a_2})\cdots (1-\lambda^{a_k})}.
\end{align*}
The remainder of the argument is analogous to that in Theorem \ref{MainThm1} and is left to the reader.
\end{proof}

Similar to Corollaries \ref{Corr23} and \ref{Corr45}, we can obtain the following three corollaries. We omit the proofs.
\begin{cor}[\cite{Sertoz98}]
Let $A=\{a_1,a_2\}$ with $\gcd(A)=1$. Let $1\leq x\leq a_1+a_2-1$. Then
$$p_A(a_1a_2-x)=1.$$
\end{cor}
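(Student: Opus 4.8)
The plan is to obtain this corollary as the $k=2$ specialization of Theorem \ref{MainThm2}, exactly in the spirit of how Corollary \ref{Corr23} is deduced from Theorem \ref{MainThm1}. When $k=2$ we have $k-2=0$, so the outer sum in Theorem \ref{MainThm2} consists of the single term $i=0$. Reading off the factorials for this term, $(i+1)!=1!=1$ and $(k-i-2)!=0!=1$, while $(-a_1a_2)^0=1$ and $(-1)^k=1$; hence the formula collapses to
$$p_A(a_1a_2-x)=a_1a_2\,\mathcal{B}_0(x;a_1,a_2).$$
So the whole problem reduces to evaluating the leading Bernoulli--Barnes coefficient $\mathcal{B}_0(x;a_1,a_2)$.

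First I would compute $\mathcal{B}_0(x;a_1,a_2)$ directly from the defining generating function \eqref{Bernoulli-Barnes}. Taking $s\to 0$ in
$$\frac{s^2e^{xs}}{(e^{a_1s}-1)(e^{a_2s}-1)}=\sum_{i\geq 0}\mathcal{B}_i(x;a_1,a_2)\frac{s^i}{i!},$$
and using $e^{a_js}-1=a_js+O(s^2)$ together with $e^{xs}=1+O(s)$, the left-hand side tends to $\tfrac{1}{a_1a_2}$; this is precisely the first displayed expansion used in the proof of Corollary \ref{Corr23}, with the shift $-r$ replaced by $x$. Thus $\mathcal{B}_0(x;a_1,a_2)=\tfrac{1}{a_1a_2}$, independently of the value of $x$. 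Substituting back gives $p_A(a_1a_2-x)=a_1a_2\cdot\tfrac{1}{a_1a_2}=1$, which is the claim.

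I do not anticipate a genuine obstacle: the statement is a direct specialization, and the only points needing care are the correct bookkeeping of the index range and factorials in Theorem \ref{MainThm2} at $k=2$ (ensuring exactly one surviving term) and the observation that the constant term $\mathcal{B}_0$ of the Bernoulli--Barnes expansion is unaffected by the shift parameter $x$. As a consistency check, the hypothesis $1\leq x\leq a_1+a_2-1$ is exactly what makes Theorem \ref{MainThm2} applicable, and it corresponds to $a_1a_2-x$ ranging over $(a_1-1)(a_2-1)\leq n\leq a_1a_2-1$; every such $n$ exceeds the Frobenius number $a_1a_2-a_1-a_2$ and is strictly below $a_1a_2$, so for two coprime parts it admits exactly one representation, in agreement with the value $1$.
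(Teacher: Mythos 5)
Your proposal is correct and is exactly the route the paper intends: the paper omits the proof, stating it is obtained "similar to Corollaries \ref{Corr23} and \ref{Corr45}," i.e.\ by specializing Theorem \ref{MainThm2} at $k=2$ and evaluating the leading coefficient of the Bernoulli--Barnes expansion, which is precisely your computation $\mathcal{B}_0(x;a_1,a_2)=\tfrac{1}{a_1a_2}$. Your bookkeeping of the single $i=0$ term and the factorials is accurate, so nothing is missing.
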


\begin{cor}[\cite{Sertoz86}]
Let $A=\{a_1,a_2, a_3\}$, where $a_1,a_2,a_3$ are pairwise relatively prime positive integers.
Let $1\leq x\leq a_1+a_2+a_3-1$. Then
$$p_A(a_1a_2a_3-x)=\frac{a_1a_2a_3+a_1+a_2+a_3}{2}-x.$$
\end{cor}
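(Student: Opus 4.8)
The plan is to specialize Theorem \ref{MainThm2} to $k=3$ and then evaluate the two Bernoulli--Barnes polynomials that survive in the sum. Writing $P=a_1a_2a_3$ for brevity, the index $i$ runs only over $i=0,1$ and the prefactor is $(-1)^3P=-P$, so Theorem \ref{MainThm2} specializes to
\begin{align*}
p_A(P-x)=-P\left(\mathcal{B}_1(x;a_1,a_2,a_3)-\frac{P}{2}\,\mathcal{B}_0(x;a_1,a_2,a_3)\right).
\end{align*}
Hence the whole task reduces to finding the two lowest Bernoulli--Barnes polynomials.

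To compute these I would extract the coefficients of $s^0$ and $s^1$ in the defining generating function \eqref{Bernoulli-Barnes}. Using $\frac{s}{e^{a_is}-1}=\frac{1}{a_i}-\frac{1}{2}s+O(s^2)$ for each of the three factors (which follows from the Bernoulli-number series recalled in Section 3), one multiplies out
\begin{align*}
\prod_{i=1}^{3}\frac{s}{e^{a_is}-1}=\frac{1}{P}-\frac{a_1+a_2+a_3}{2P}\,s+O(s^2),
\end{align*}
and then multiplies by $e^{xs}=1+xs+O(s^2)$. Matching against $\sum_{i\ge 0}\mathcal{B}_i(x;a_1,a_2,a_3)\frac{s^i}{i!}$ gives
\begin{align*}
\mathcal{B}_0(x;a_1,a_2,a_3)=\frac{1}{P},\qquad \mathcal{B}_1(x;a_1,a_2,a_3)=\frac{2x-(a_1+a_2+a_3)}{2P}.
\end{align*}

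Substituting these into the displayed formula, the powers of $P$ cancel: the $-P\,\mathcal{B}_1$ term contributes $-\tfrac12\bigl(2x-(a_1+a_2+a_3)\bigr)$ and the $\tfrac{P^2}{2}\mathcal{B}_0$ term contributes $\tfrac{P}{2}$, and their sum is $\tfrac{P+a_1+a_2+a_3}{2}-x$, exactly the asserted value. The argument is entirely routine; there is no genuine obstacle, since the hypothesis $1\le x\le a_1+a_2+a_3-1$ is precisely what was needed to invoke Theorem \ref{MainThm2}. The only point requiring a little care is the low-order series bookkeeping—correctly propagating the $\tfrac{1}{a_i}$ prefactors through the threefold product and remembering that $\mathcal{B}_i$ equals $i!$ times the coefficient of $s^i$—so as a sanity check I would compare $\mathcal{B}_0,\mathcal{B}_1$ with the generating function already computed in the proof of Corollary \ref{Corr23} under the substitution $x=-r$, which agrees.
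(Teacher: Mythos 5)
Your proposal is correct and follows exactly the route the paper intends: the paper omits this proof, stating it is obtained from Theorem \ref{MainThm2} in the same way Corollary \ref{Corr23} follows from Theorem \ref{MainThm1}, namely by specializing to $k=3$ and reading off the low-order coefficients of the generating function \eqref{Bernoulli-Barnes}. Your computation of $\mathcal{B}_0$ and $\mathcal{B}_1$, the substitution, and the cross-check against the series in the proof of Corollary \ref{Corr23} (with $x=-r$) are all accurate.
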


\begin{cor}
Following the notation in Theorem \ref{MainThm2}.
If $A=\{a_1,a_2,a_3,a_4\}$, then
\begin{align*}
p_A(a_1a_2a_3a_4-x)=\frac{1}{12}\Big(&3(n-x)(a_1+a_2+a_3+a_4)+2(n-x)^2+2nx+(a_1+a_2+a_3+a_4)^2
\\&\ \ +a_1a_2+a_1a_3+a_1a_4+a_2a_3+a_2a_4+a_3a_4\Big),
\end{align*}
where $n=a_1a_2a_3a_4-x$.

If $A=\{a_1,a_2,a_3,a_4,a_5\}$, then
\begin{align*}
p_A(a_1a_2a_3&a_4a_5-x)=\frac{1}{24}\bigg((n-x)(n^2+x^2)+(2n^2-2nx+2x^2)\Big(\sum_{i=1}^5a_i\Big)
+(n-x)\Big(\sum_{i=1}^5a_i^2\Big)
\\&+\sum_{i=1}^5a_i^2\Big(\sum_{j=1}^5a_j-a_i\Big)+3(n-x)\sum_{1\leq i<j\leq 5}a_ia_j+3\sum_{1\leq i<j\leq 5}\frac{a_1a_2a_3a_4a_5}{a_ia_j}\bigg),
\end{align*}
where $n=a_1a_2a_3a_4a_5-x$.
\end{cor}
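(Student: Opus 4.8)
The plan is to derive both formulas by specializing Theorem \ref{MainThm2} to $k=4$ and $k=5$ and then computing the relevant Bernoulli--Barnes polynomials explicitly, exactly in the spirit of the proof of Corollary \ref{Corr23}. Writing $P=a_1a_2\cdots a_k$ and $n=P-x$, Theorem \ref{MainThm2} reduces the statement to evaluating the finite sum $(-1)^kP\sum_{i=0}^{k-2}\frac{(-P)^i}{(i+1)!(k-i-2)!}\mathcal{B}_{k-i-2}(x;a_1,\ldots,a_k)$, so everything comes down to knowing $\mathcal{B}_0,\mathcal{B}_1,\mathcal{B}_2$ (for $k=4$) and $\mathcal{B}_0,\dots,\mathcal{B}_3$ (for $k=5$) as polynomials in $x$ and the $a_i$.

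First I would compute these Bernoulli--Barnes polynomials from the defining generating function \eqref{Bernoulli-Barnes}. The key factorization is
\begin{align*}
\frac{s^ke^{xs}}{\prod_{i=1}^k(e^{a_is}-1)}=\frac{e^{xs}}{a_1a_2\cdots a_k}\prod_{i=1}^k\frac{a_is}{e^{a_is}-1},
\end{align*}
together with the Bernoulli expansion $\frac{a_is}{e^{a_is}-1}=1-\frac{a_i}{2}s+\frac{a_i^2}{12}s^2-\frac{a_i^4}{720}s^4+\cdots$, whose $s^3$ coefficient vanishes. Multiplying out and collecting powers of $s$ gives, with $e_j$ the $j$-th elementary symmetric function and $p_j$ the power sum of $a_1,\ldots,a_k$, expressions such as $\mathcal{B}_0=\frac{1}{P}$, $\mathcal{B}_1=\frac{1}{P}\big(x-\frac{e_1}{2}\big)$, and $\mathcal{B}_2=\frac{1}{P}\big(x^2-e_1x+\frac{p_2}{6}+\frac{e_2}{2}\big)$, with the analogous $\mathcal{B}_3$ in the $k=5$ case, where the degree-three part of $\prod_i\frac{a_is}{e^{a_is}-1}$ contributes the mixed symmetric terms $\frac{1}{24}(p_3-e_1p_2)$ and $-\frac{e_3}{8}$.

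Then I would substitute these into the specialized sum and simplify, using $P=n+x$ to rewrite every occurrence of $a_1a_2\cdots a_k$. After clearing denominators the $1/P$ factors from the $\mathcal{B}_i$ cancel against the leading $P$, and the elementary-symmetric/power-sum identities $e_1^2=p_2+2e_2$, $\sum_i a_i^2(e_1-a_i)=e_1p_2-p_3$, and $\sum_{i<j}\frac{a_1\cdots a_k}{a_ia_j}=e_3$ convert the result into exactly the stated polynomials in $n$, $x$, and the symmetric data of the $a_i$. For $k=4$ this reproduces the claimed $\frac{1}{12}(\cdots)$ expression after using the identity $2(n-x)^2+2nx=2(n+x)^2-6nx$.

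The main obstacle is purely computational and lives in the $k=5$ case: extracting the $s^3$ coefficient of $e^{xs}\prod_{i=1}^5\frac{a_is}{e^{a_is}-1}$ requires careful bookkeeping of the $(2,1)$ and $(1,1,1)$ contributions across five factors, and the final cross-check demands recognizing the resulting mixed terms as the symmetric combinations $\sum_i a_i^2(\sum_j a_j-a_i)=e_1p_2-p_3$ and $\sum_{i<j}\frac{a_1\cdots a_5}{a_ia_j}=e_3$ that appear verbatim in the target formula. No idea beyond Theorem \ref{MainThm2} and the generating-function expansion is needed; the effort lies entirely in organizing the algebra so that the $n$-and-$x$ form of the answer emerges cleanly.
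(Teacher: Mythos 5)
Your proposal is correct and takes essentially the same route the paper intends: the paper omits the proof, noting it is ``similar to Corollaries \ref{Corr23} and \ref{Corr45}'', i.e., specialize Theorem \ref{MainThm2} to $k=4,5$ and expand the Bernoulli--Barnes generating function \eqref{Bernoulli-Barnes} to the needed order. Your explicit values $\mathcal{B}_0=\frac{1}{P}$, $\mathcal{B}_1=\frac{1}{P}\bigl(x-\frac{e_1}{2}\bigr)$, $\mathcal{B}_2=\frac{1}{P}\bigl(x^2-e_1x+\frac{p_2}{6}+\frac{e_2}{2}\bigr)$, the cubic contributions $\frac{1}{24}(p_3-e_1p_2)$ and $-\frac{e_3}{8}$, and the simplifications via $P=n+x$, $e_1^2=p_2+2e_2$, $\sum_i a_i^2(e_1-a_i)=e_1p_2-p_3$, and $\sum_{i<j}\frac{a_1\cdots a_k}{a_ia_j}=e_3$ all check out against the stated formulas.
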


\section{The Proof of Theorem \ref{MainThm3}}

\begin{proof}[Proof of Theorem \ref{MainThm3}]
By Equation \eqref{CTDenumer}, we have
\begin{align*}
p_A(a_1a_2\cdots a_k-x)&=\mathop{\mathrm{CT}}\limits_{\lambda}\frac{\lambda^{-a_1a_2\cdots a_k+x}}{(1-\lambda^{a_1})(1-\lambda^{a_2})\cdots (1-\lambda^{a_k})}.
\end{align*}
Since $a_1+a_2+\cdots +a_k\leq x\leq a_1a_2\cdots a_k$, we have 
\begin{align*}
\mathop{\mathrm{CT}}\limits_{\lambda}\frac{\lambda^{x}}{(1-\lambda^{a_1})(1-\lambda^{a_2})\cdots (1-\lambda^{a_k})}=0.
\end{align*}
We have
\begin{align*}
p_A(a_1a_2\cdots a_k-x)&=\mathop{\mathrm{CT}}\limits_{\lambda}\frac{\lambda^{-a_1a_2\cdots a_k+x}-\lambda^{x}}{(1-\lambda^{a_1})(1-\lambda^{a_2})\cdots (1-\lambda^{a_k})}
=\mathop{\mathrm{CT}}\limits_{\lambda}\frac{\lambda^{x}(\lambda^{-a_1a_2\cdots a_k}-1)}{(1-\lambda^{a_1})(1-\lambda^{a_2})\cdots (1-\lambda^{a_k})}.
\end{align*}
Let
$$G(\lambda)=\frac{\lambda^{x-1}(\lambda^{-a_1a_2\cdots a_k}-1)}{(1-\lambda^{a_1})(1-\lambda^{a_2})\cdots (1-\lambda^{a_k})}.$$
Then
\begin{align*}
p_A(a_1a_2\cdots a_k-x)=\mathop{\mathrm{CT}}\limits_{\lambda}\lambda G(\lambda)
=\mathop{\mathrm{Res}}\limits_{\lambda=0}G(\lambda)
=-\sum_{\xi}\mathop{\mathrm{Res}}\limits_{\lambda=\xi}G(\lambda)-\mathop{\mathrm{Res}}\limits_{\lambda=\infty}G(\lambda),
\end{align*}
where $\xi$ ranges over all nonzero poles of $G(\lambda)$. 
Similar to the proof of Theorem \ref{MainThm1}, we have 
$$p_A(a_1a_2\cdots a_k-x)=-\mathop{\mathrm{Res}}\limits_{\lambda=1}G(\lambda)-\mathop{\mathrm{Res}}\limits_{\lambda=\infty}G(\lambda).$$
By $a_1+a_2+\cdots +a_k\leq x\leq a_1a_2\cdots a_k$, we obtain
\begin{align}\label{RRES=0}
\mathop{\mathrm{Res}}\limits_{\lambda=\infty}\frac{\lambda^{x-1-a_1a_2\cdots a_k}}{(1-\lambda^{a_1})(1-\lambda^{a_2})\cdots (1-\lambda^{a_k})}=0.
\end{align}
The proof of Equation \eqref{RRES=0} is similar to Lemma \ref{ResNonegit}.
Let 
$$G_1(\lambda)=\frac{-\lambda^{x-1}}{(1-\lambda^{a_1})(1-\lambda^{a_2})\cdots (1-\lambda^{a_k})}.$$
Then 
\begin{align*}
p_A(a_1a_2\cdots a_k-x)
&=-\mathop{\mathrm{Res}}\limits_{\lambda=1}G(\lambda)-\mathop{\mathrm{Res}}\limits_{\lambda=\infty}G_1(\lambda)
=-\mathop{\mathrm{Res}}\limits_{s=0}G(e^s)\cdot e^s-\mathop{\mathrm{Res}}\limits_{\lambda=0}G_1(\lambda^{-1})\cdot \frac{-1}{\lambda^2}
\\&=-\mathop{\mathrm{CT}}\limits_{s}G(e^s)\cdot e^s\cdot s+\mathop{\mathrm{CT}}\limits_{\lambda}G_1(\lambda^{-1})\cdot \frac{1}{\lambda}
\\&=-\mathop{\mathrm{CT}}\limits_{s}G(e^s)\cdot e^s\cdot s+\mathop{\mathrm{CT}}\limits_{\lambda}\frac{(-1)^{k+1}\lambda^{a_1+a_2+\cdots +a_k-x}}{(1-\lambda^{a_1})(1-\lambda^{a_2})\cdots (1-\lambda^{a_k})}
\\&=-\mathop{\mathrm{CT}}\limits_{s}G(e^s)\cdot e^s\cdot s+(-1)^{k+1}p_{A}(x-a_1-a_2-\cdots -a_k).
\end{align*}
The remainder of the argument is analogous to Equation \eqref{CTFGNeed} and is left to the reader.
\end{proof}

\begin{cor}\label{A123K3}
Let $A=\{a_1,a_2, a_3\}$, where $a_1,a_2,a_3$ are pairwise relatively prime positive integers.
Let $a_1+a_2+a_3\leq x\leq a_1a_2a_3$. Then
$$p_A(a_1a_2a_3-x)-p_{A}(x-a_1-a_2-a_3)=\frac{a_1a_2a_3+a_1+a_2+a_3}{2}-x.$$
\end{cor}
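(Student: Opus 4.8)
The plan is to obtain this corollary as the specialization $k=3$ of Theorem \ref{MainThm3}, exactly parallel to the way Corollary \ref{Corr23} is derived from Theorem \ref{MainThm1}. Since $(-1)^3=-1$, the left-hand side of Theorem \ref{MainThm3} already matches the left-hand side of the corollary, so the whole task reduces to evaluating the right-hand side for $k=3$.

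First I would write out the sum on the right of Theorem \ref{MainThm3} for $k=3$: it runs over $i\in\{0,1\}$ and equals $-a_1a_2a_3\bigl(\mathcal{B}_1(x;a_1,a_2,a_3)-\tfrac{a_1a_2a_3}{2}\mathcal{B}_0(x;a_1,a_2,a_3)\bigr)$, the two coefficients coming from $\tfrac{1}{1!\,1!}$ at $i=0$ and $\tfrac{-a_1a_2a_3}{2!\,0!}$ at $i=1$. Thus I only need the two lowest Bernoulli-Barnes polynomials.

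Next I would read those off from the defining generating function \eqref{Bernoulli-Barnes}. Rewriting $\frac{s^3e^{xs}}{(e^{a_1s}-1)(e^{a_2s}-1)(e^{a_3s}-1)}=\frac{e^{xs}}{a_1a_2a_3}\prod_{j=1}^{3}\frac{a_js}{e^{a_js}-1}$ and using $\frac{a_js}{e^{a_js}-1}=1-\frac{a_js}{2}+O(s^2)$ together with $e^{xs}=1+xs+O(s^2)$, expansion to order $s$ yields $\mathcal{B}_0(x;a_1,a_2,a_3)=\frac{1}{a_1a_2a_3}$ and $\mathcal{B}_1(x;a_1,a_2,a_3)=\frac{1}{a_1a_2a_3}\bigl(x-\frac{a_1+a_2+a_3}{2}\bigr)$; this is the same kind of truncated expansion carried out in the proof of Corollary \ref{Corr23}. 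Substituting these values, the factor $a_1a_2a_3$ cancels and the right-hand side collapses to $\frac{a_1a_2a_3+a_1+a_2+a_3}{2}-x$, as claimed.

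The computation is entirely routine; there is no genuine conceptual obstacle beyond bookkeeping. The only points requiring a little care are keeping the alternating sign $(-1)^k$ and the factor $(-a_1a_2a_3)^i$ straight, and correctly truncating the product $\prod_{j}\frac{a_js}{e^{a_js}-1}$ at order $s$ — retaining only the $-\frac{a_js}{2}$ term from each factor and discarding the $O(s^2)$ contributions, which do not affect $\mathcal{B}_0$ and $\mathcal{B}_1$.
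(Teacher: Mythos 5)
Your proposal is correct and is exactly the derivation the paper intends: the paper omits the proof of Corollary \ref{A123K3}, but it is meant to follow by specializing Theorem \ref{MainThm3} to $k=3$ and expanding the generating function \eqref{Bernoulli-Barnes} to first order in $s$, precisely as in the paper's proof of Corollary \ref{Corr23}. Your bookkeeping of the sign $(-1)^3$, the coefficients $\tfrac{1}{1!\,1!}$ and $\tfrac{-a_1a_2a_3}{2!\,0!}$, and the values $\mathcal{B}_0=\tfrac{1}{a_1a_2a_3}$, $\mathcal{B}_1=\tfrac{1}{a_1a_2a_3}\bigl(x-\tfrac{a_1+a_2+a_3}{2}\bigr)$ all check out.
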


When $x=a_1+a_2+a_3$ (and $x=a_1+a_2+a_3+1$) in Corollary \ref{A123K3}, we obtain the results of Brown, Chou, and Shiue \cite{Brown} as follows: 
$$p_A(a_1a_2a_3-a_1-a_2-a_3)=\frac{a_1a_2a_3-a_1-a_2-a_3}{2}+1,$$
and 
$$p_A(a_1a_2a_3-a_1-a_2-a_3-1)=p_A(1)+\frac{a_1a_2a_3-a_1-a_2-a_3}{2}-1.$$
Note: $p_A(1)=0$ when $a_1,a_2,a_3\geq 2$.

%\section{Concluding Remark}
%We summarize as follows: In this paper, we.......

%\noindent
%\textbf{Declaration of competing interest:} None.
%
%\noindent
%\textbf{Data availability:} No data is available.

\noindent
{\small \textbf{Acknowledgements:}
%The authors would like to express their sincere appreciation for all suggestions for improving the presentation of this paper.
This work is partially supported by the National Natural Science Foundation of China [12071311].


\begin{thebibliography}{99}

\bibitem{GostGarcia10} F. Aguil\'o-Gost and P. A. Garc\'ia-S\'anchez, \emph{Factoring in embedding dimension three numerical semigroups}, Electron. J. Combin. 17 (2010), \#R138.

\bibitem{BayadBeck} A. Bayad and M. Beck, \emph{Relations for Bernoulli-Barnes numbers and Barnes zeta functions}, Int. J. Number Theory. 10 (2014), 1321--1335.

\bibitem{BeckGesselKomatsu} M. Beck, I. M. Gessel, and T. Komatsu, \emph{The polynomial part of a restricted partition related to the Frobenius problem}, Electron. J. Combin. 8(1) (2001), \#7.

\bibitem{Bell43} E. T. Bell, \emph{Interpolated denumerants and Lambert series}, Am. J. Math. 65 (1943), 382--386.

\bibitem{Brown} T. C. Brown, W. S. Chou, and P. J. Shiue, \emph{On the partition function of a finite set}, Australas. J. Combin. 27 (2003), 193--204.

\bibitem{Cimpoeas18} M. Cimpoeas and F. Nicolae, \emph{On the restricted partition function}, Ramanujan J. 47 (2018), 565--588.

\bibitem{Ehrhart65} E. Ehrhart, \emph{Sur un probl\'eme de g\'eom\'etrie diophantienne lin\'eaire. I. poly\'edreset r\'eseaux}, J. Reine Angew. Math. 226 (1965), 1--29.

\bibitem{Ehrhart66} E. Ehrhart, \emph{Sur un probl\'eme de g\'eom\'etrie diophantienne lin\'eaire. II. syst\'emes diophantiens lin\'eaires}, J. Reine Angew. Math. 227 (1966), 30--54.

\bibitem{Jacobi} C. G. J. Jacobi, \emph{De resolutione aequationum per series infinitas}, J. Reine Angew. Math. 6 (1830), 257--286.

\bibitem{Komatsu03} T. Komatsu, \emph{On the number of solutions of the Diophantine equation of Frobenius-general case}, Math. Commun. 28 (2003), 195--206.

\bibitem{Nathanson} M. B. Nathanson, \emph{Partition with parts in a finite set}, Proc. Amer. Math. Soc. 128 (2000), 1269--1273.

\bibitem{Sertoz98} S. Sert\"oz, \emph{On the number of solutions of a diophantine equation of Frobenius}, Discrete Math. Appl. 8 (1998), 153--162.

\bibitem{Sertoz86} S. Sert\"oz and A. E. \"Ozl\"uk, \emph{On the number of representations of an integer by a linear form}, Istanbul Tek. \"Univ. Fen Fak. Mat. Derg. 50 (1991), 67--77.

\bibitem{J. J. Sylvester} J. J. Sylvester, \emph{On the partition of numbers}, Quart.J. Pure Appl. Math. 1 (1857),  141--152.

\bibitem{A.Tripathi} A. Tripathi, \emph{The number of solutions to $ax+by=n$}, Fibonacci Quart. 38 (2000), 290--293.

\bibitem{XinZhangDernm} G. Xin and C. Zhang, \emph{An algebraic combinatorial approach to Sylvester's denumerant}, arXiv:2312.01569v1. (2023).

\bibitem{XinZhangZhang2024} G. Xin, Y. Zhang, and Z. Zhang, \emph{Fast evaluation of generalized Todd polynomials: applications to MacMahon's partition analysis and integer programming}, arXiv: 2304.13323v2. (2024).


\end{thebibliography}
\end{document}